\documentclass[12pt]{article}

\usepackage{amsmath, amsthm, amssymb}
\usepackage{hyperref}
\usepackage{geometry}
\usepackage{enumitem}
\usepackage{cite}
\usepackage{microtype}
\usepackage{ragged2e}

\newtheorem{theorem}{Theorem}[section]
\newtheorem{lemma}[theorem]{Lemma}
\newtheorem{proposition}[theorem]{Proposition}
\newtheorem{corollary}[theorem]{Corollary}

\theoremstyle{definition}

\newtheorem{example}[theorem]{Example}
\newtheorem{remark}[theorem]{Remark}
\newtheorem{observation}[theorem]{Observation}
\newtheorem{convention}[theorem]{Convention}

\newcommand{\ex}{\mathrm{ex}}

\title{\textbf{Bipartite Tur\'{a}n Numbers of Trees and Star Forests}}

\author{
	Omid Khormali\\
	\small Department of Mathematics, Harvey Mudd College, Claremont, CA 91711\\
	\small \texttt{okhormali@hmc.edu}
}

\date{}
\begin{document}
	\sloppy
	
	\maketitle
	
	\begin{abstract}


        The bipartite Tur\'an number of a graph $H$, denoted $\text{ex}(m, n; H)$, is the maximum
        number of edges in any $H$-free bipartite graph $G = (A, B; E)$ with parts of size $|A| = m$ and $|B| = n$. We study this problem for two families. For a tree $T = T(r, s)$ with parts   $R$ and $S$ of sizes $|R| = r \le s = |S|$, we prove
        \[
        (r - 1) n \;\le\; \text{ex}(m, n; T(r, s)) \;\le\; (r - 1) n + O(m)
        \]
        for $n$ sufficiently large compared to $m$, $r$, and $s$, determining the leading-order term exactly (with the star case $r=1$ solved with an exact formula). For a star forest $F = \bigcup_{i=1}^k S_{d_i}$ with $d_1 \ge \cdots \ge d_k$, we determine the exact value $\text{ex}(m, n; F) = (k - 1) n + (d_k - 1)(m - k + 1)$ for $n$ sufficiently large, and characterize the unique extremal graph.

	\end{abstract}

\textbf{Keywords:} bipartite Turán number, extremal graph theory, trees, star forests, 
Erdős–Sós conjecture, Kővári–Sós–Turán theorem.

	\bigskip

	\section{Introduction}

	The \emph{Tur\'{a}n number} of a graph $H$, denoted $\ex(n, H)$, is the maximum number of edges in an $H$-free graph on $n$ vertices. Determining Tur\'{a}n numbers is one of the central problems of extremal graph theory, initiated by the classical theorem of
	Tur\'{a}n~\cite{Turan1941} on complete graphs and extended to general graphs by the Erd\H{o}s--Stone--Simonovits theorem~\cite{ErdosStone1946, ErdosSimonovits1966}, which
	states that
	\[
	\ex(n, H) = \left(1 - \frac{1}{\chi(H)-1}\right)\frac{n^2}{2} + o(n^2),
	\]
	where $\chi(H)$ is the chromatic number of $H$. For bipartite graphs $H$ (i.e., $\chi(H) = 2$), this theorem only gives $\ex(n, H) = o(n^2)$, and determining the correct order of magnitude becomes significantly harder.\\
	
	A natural variant is the \emph{bipartite Tur\'{a}n number}. Given a
	graph $H$ and integers $m \leq n$, we define
	\[
	\ex(m, n;\, H)
	\]
	to be the maximum number of edges in any $H$-free bipartite graph $G = (A, B; E)$ with $|A| = m$ and $|B| = n$. This problem is closely interconnected with the standard Tur\'{a}n problem~\cite{Furedi1996, KST1954}. Classic results in this setting include the K\H{o}v\'{a}ri--S\'{o}s--Tur\'{a}n theorem~\cite{KST1954}
	for complete bipartite graphs,
	\[
	\ex(m, n;\, K_{s,t}) \leq \frac{1}{2}(s-1)^{1/t}\,n\,m^{1-1/t} + \frac{t-1}{2}\,m,
	\]
	and the result of Gy\'{a}rf\'{a}s, Rousseau, and  Schelp~\cite{Gyarfas1984} which completely determined $\ex(m, n;\, P_k)$ for all values of $m$, $n$, and $k$, where $P_k$ denotes the path on $k$ vertices. More recently, Chen, Wang, Yuan, and
	Zhang~\cite{Chen2022} extended this to linear forests, which is vertex-disjoint unions of paths, in the bipartite host setting, thereby establishing the bipartite analogue of the Tur\'{a}n numbers determined by Lidick\'{y}, Liu, and Palmer~\cite{LLP2013} for general hosts.\\
	
	Trees occupy a special place in the Tur\'{a}n theory of sparse graphs. Erd\H{o}s and Gallai~\cite{ErdosGallai1959} determined the Tur\'{a}n number of paths.
	
	\begin{theorem}[Erd\H{o}s--Gallai~\cite{ErdosGallai1959}]\label{thm:EG}
		For any $k, n \geq 2$,
		\[
		\ex(n, P_k) \leq \frac{k-2}{2}\,n,
		\]
		with equality achieved by the disjoint union of complete graphs $K_{k-1}$.
	\end{theorem}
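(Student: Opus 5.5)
We argue by induction on $n$, proving the statement in the form: every graph on $n$ vertices with no path on $k$ vertices has at most $\frac{k-2}{2}n$ edges. Here $P_k$ has $k$ vertices and $k-1$ edges. The base cases $n\le k-1$ are immediate, since then $e(G)\le\binom{n}{2}=\frac{n-1}{2}n\le\frac{k-2}{2}n$. For the inductive step, if $G$ has a vertex of degree at most $\frac{k-2}{2}$, we delete it and apply induction to $G-v$. So we may assume $\delta(G)>\frac{k-2}{2}$. We may also assume $G$ is connected: if not, apply induction to each component and add the bounds, which is valid because the bound is linear in $n$.

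The core step is the classical longest-path/rotation argument of Dirac and P\'osa type. Let $G$ be connected with $\delta(G)\ge \lceil (k-1)/2\rceil$, and take a longest path $P=v_1v_2\cdots v_\ell$. By maximality, all neighbours of $v_1$ and of $v_\ell$ lie on $P$. If $v_1\sim v_{i+1}$ and $v_i\sim v_\ell$ for some $i$, then $P$ closes into a cycle on $\ell$ vertices. Suppose no such crossing pair exists. The sets $\{i: v_1\sim v_{i+1}\}$ and $\{i : v_i\sim v_\ell\}$ are then disjoint subsets of $\{1,\dots,\ell-1\}$, so $\ell-1\ge 2\delta\ge k-1$, and $P$ already contains a $P_k$. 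Otherwise we have a cycle $C$ of length $\ell$. If $\ell<n$, connectivity gives a vertex outside $C$ adjacent to $C$, and opening $C$ at that point produces a path on $\ell+1$ vertices, contradicting maximality. Hence $\ell=n$. But then $G$ has a path on $n\ge k$ vertices, since we are past the base case.

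The degree threshold needs care, and this is where I expect the main obstacle. The induction only guarantees $\delta>\frac{k-2}{2}$, that is, $\delta\ge\lfloor\frac{k-2}{2}\rfloor+1=\lceil\frac{k-1}{2}\rceil$. This is exactly the $2\delta\ge k-1$ used in the disjointness count above. So the parity bookkeeping closes, but it must be checked for both parities of $k$.

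For equality, note that $K_{k-1}$ has $k-1$ vertices and so contains no $P_k$, while it has $\binom{k-1}{2}=\frac{k-2}{2}(k-1)$ edges. When $(k-1)\mid n$, a disjoint union of $n/(k-1)$ copies therefore attains $\frac{k-2}{2}n$.
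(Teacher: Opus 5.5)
The paper gives no proof of this statement. It is quoted from Erd\H{o}s and Gallai purely as background and is never invoked in the proofs of Theorems~\ref{thm:main} and~\ref{thm:starforest}, so there is no internal argument to compare yours against.

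Your proof is correct and is the standard self-contained one. Strong induction on $n$ lets you discard a vertex of degree at most $\frac{k-2}{2}$, or split into components. What remains is a connected graph on $n\ge k$ vertices with $2\delta\ge k-1$; your parity check is right for both parities of $k$. The longest-path/crossing-pair argument then shows that a longest path has at least $\min\{n,\,2\delta+1\}\ge k$ vertices.

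The only degenerate spot is a crossing pair when $\ell=2$, where the ``cycle'' is just an edge. The opening step still goes through verbatim, so nothing breaks.

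Compared with the bare citation, your route buys self-containment. It also proves the stronger intermediate fact that a connected graph with minimum degree $\delta$ contains a path on $\min\{n,2\delta+1\}$ vertices. You also correctly make explicit that equality via disjoint copies of $K_{k-1}$ requires $(k-1)\mid n$, a condition the paper's statement leaves implicit.
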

	
	A far-reaching generalization is the \emph{Erd\H{o}s--S\'{o}s conjecture}~\cite{Erdos1964}, which asserts that every graph on $n$ vertices with more than $\frac{k-2}{2}n$ edges contains every tree on $k$ vertices as a subgraph. Although still open in full generality, the conjecture has been verified for many special classes of trees and for
	large trees by Ajtai, Koml\'{o}s, Simonovits, and Szemer\'{e}di (announced but not yet published in full).\\
	
	In the bipartite host setting, the Erd\H{o}s--S\'{o}s conjecture takes a natural form that for a given tree $T$ on $k$ vertices, what is the threshold edge density in a bipartite graph $G[A,B]$ with $|A|=m$, $|B|=n$ that forces $T \subseteq G$? This question has received comparatively less attention than the general host version, and our paper contributes to this line of research.\\

	Let $T = T(r,s)$ denote a tree with parts $R$ and $S$ where $|R| = r$ and $|S| = s$. We always assume without loss of generality that $r \leq s$, so that $R$ is the smaller part. The following result gives an upper bound on $\ex(m, n;\, T(r,s))$ in terms of $r =
	\min(r,s)$ and $n$.



\begin{theorem}\label{thm:main}
    Let $T = T(r,s)$ be a tree with parts $R$ and $S$ such that 
    $|R| = r \leq s = |S|$. If $r = 1$ then 
    $\ex(m,n;\,T(1,s)) = (s-1)m$. If $r \geq 2$, let $m$ and $n$ 
    be positive integers satisfying $n \geq s(m-r+2)\binom{m}{r-1}$.
    Then
    \[
        \ex(m,\,n;\,T(r,s)) \;\leq\; (r-1)\,n \;+\; C(m,r,s),
    \]
    where $C(m,r,s) = \binom{m}{r-1}^2(s-1) + m\lfloor 
    \frac{m}{r-1}\rfloor s + ms(m-r+2)\binom{m}{r-1}$.
\end{theorem}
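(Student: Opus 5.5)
The plan is to reduce everything to the complete bipartite graph $K_{r,s}$, which contains $T(r,s)$ as a subgraph. Then I split the edges according to whether their endpoint in $B$ has low or high degree. If this works, the hypothesis $n \ge s(m-r+2)\binom{m}{r-1}$ is not needed for the upper bound; I use only a weaker bound than $C(m,r,s)$.

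\emph{Star case $r=1$.} Here $T(1,s)=K_{1,s}$, so an $H$-free $G$ has maximum degree at most $s-1$. Summing degrees over the smaller side $A$ gives $e(G)\le (s-1)m$. For the matching lower bound I take a bipartite graph in which every vertex of $A$ has degree exactly $s-1$ and the edges are spread over $B$ in round-robin fashion, so every vertex of $B$ also has degree at most $s-1$. This needs $n\ge s-1$, which I assume is implicit (it holds in the regime $n\ge m$ relevant here, unless $n<s-1$, where $K_{m,n}$ itself is extremal).

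\emph{Case $r\ge 2$.} Let $G=(A,B;E)$ be $T(r,s)$-free. Since $T(r,s)\subseteq K_{r,s}$, I embed $R$ into $A$ and $S$ into $B$. This shows that no $r$-subset $X\subseteq A$ has $s$ or more common neighbours in $B$, for otherwise $G\supseteq K_{r,s}\supseteq T$. Now split $B=B_{\mathrm{low}}\cup B_{\mathrm{high}}$, where $B_{\mathrm{low}}$ consists of the vertices of degree at most $r-1$.
\begin{itemize}
\item Edges at $B_{\mathrm{low}}$ number at most $(r-1)|B_{\mathrm{low}}|\le (r-1)n$.
\item Each $b\in B_{\mathrm{high}}$ has at least $r$ neighbours, so I assign to it some $r$-subset $X_b\subseteq N(b)$. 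Each $r$-subset $X$ is assigned to at most $s-1$ vertices, since all of them are common neighbours of $X$. Hence $|B_{\mathrm{high}}|\le (s-1)\binom{m}{r}$.
\item Each $b\in B_{\mathrm{high}}$ has degree at most $m$, so the edges at $B_{\mathrm{high}}$ number at most $m(s-1)\binom{m}{r}$.
\end{itemize}
Together,
\[
e(G)\le (r-1)n+m(s-1)\binom{m}{r}.
\]

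Finally I compare with $C(m,r,s)$. Since $\binom{m}{r}=\binom{m}{r-1}\frac{m-r+1}{r}\le (m-r+2)\binom{m}{r-1}$, the error term satisfies $m(s-1)\binom{m}{r}\le ms(m-r+2)\binom{m}{r-1}$. This is the last summand of $C(m,r,s)$, and the other summands are nonnegative, which gives the claimed bound.

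The only step needing care is the embedding $T(r,s)\subseteq K_{r,s}$ with $R$ placed on the $A$ side. This holds for any bipartite graph with parts of sizes $r$ and $s$, and the placement is allowed because $R$ is an arbitrary $r$-set in $A$ while the $s$ common neighbours lie in $B$. I expect no real obstacle; the main thing to check is the direction of the comparison with $C$.
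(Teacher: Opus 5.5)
Your proof is correct, and it follows a genuinely different and much more elementary route than the paper's.

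\textbf{Your route.} You use only $T(r,s)\subseteq K_{r,s}$, so no $r$-set of $A$ has $s$ common neighbours. A single K\H{o}v\'ari--S\'os--Tur\'an-type count from the $B$ side then gives $e(G)\le (r-1)n+m(s-1)\binom{m}{r}$ for all $m,n$, which is at most the last summand of $C(m,r,s)$.

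\textbf{The paper's route.} It iterates the Chen--Wang--Yuan--Zhang common-neighbourhood lemma to peel off $(r-1)$-sets $A_t$ whose common neighbourhoods $B_t$ (in the residual part of $B$) have size at least $s$. It then uses tree embeddings to claim that the $A_t$ are pairwise disjoint and that each vertex outside $A_t$ has at most $\lfloor s/r\rfloor$ neighbours in $B_t$, and bounds four edge classes separately. The threshold $n\ge s(m-r+2)\binom{m}{r-1}$ is exactly what keeps that lemma applicable with $|B_t|\ge s$.

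\textbf{What your route buys.} You need no threshold on $n$ and no use of the tree structure; the same bound holds for every bipartite $H$ with parts of sizes $r\le s$. Your error term is of order $m^{r+1}$, never larger than $C$, and of lower order once $r\ge 4$, where the paper's $\binom{m}{r-1}^2(s-1)$ summand dominates.

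Your route also avoids the fragile steps of the paper's argument. The disjointness embedding sends the $S$-neighbours of $\rho_r\mapsto y$ into $B_j$. Such a vertex may also be adjacent in $T$ to some $\rho_\ell$ with $\ell\le r-2$, whose image in $A_i\setminus\{x\}$ need not see $B_j$. For instance, take $T$ to be the spider with centre in $S$ and three legs of length two. The graph on $A=\{x,a,y\}$ with $x$ complete to $B=B_1\cup B_2$, $a$ complete to $B_1$ and $y$ complete to $B_2$ is $T$-free, yet the iteration may return $A_1=\{x,a\}$ and $A_2=\{x,y\}$. The paper also asserts, without deriving it, the density condition $e(A,B^{(t-1)}_{\mathrm{res}})\ge (r-1)|B^{(t-1)}_{\mathrm{res}}|$ at each step.

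\textbf{The case $r=1$.} Your caveat is right and should be stated plainly rather than parenthetically. The equality needs $n\ge s-1$ (and $m\le n$): for $m\le n<s-1$ the graph $K_{m,n}$ is $T(1,s)$-free with only $mn<(s-1)m$ edges.
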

	
	\bigskip 
    
	Consider the bipartite graph $G = (A, B; E)$ where $|A| = m$, $|B| = n$, $r\geq 2$,
	and $G$ is obtained by taking $r-1$ vertices of $A$ each adjacent to all
	vertices of $B$, with the remaining $m - (r-1)$ vertices of $A$ isolated. Equivalently, $G = K_{r-1, n} \cup \overline{K}_{m-r+1}$. Then $e(G) = (r-1)n$ and $G$ contains no copy of $T(r,s)$, since any embedding of $T$ requires at least $r$ vertices on the $A$-side. Thus we have the following corollary.
	
	\begin{corollary}\label{cor:bounds}
		Let $T = T(r,s)$ be a tree with $r = \min(|R|,|S|)\geq 2$. For $m \geq r$ and $n$ sufficiently large compared to $m$, $r$, $s$:
		\[
		(r-1)\,n \leq \ex(m, n;\, T(r,s)) \leq (r\, - \, 1) \, n + O(m).
		\]
	\end{corollary}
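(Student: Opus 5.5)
The lower bound comes from the construction described just before the statement. The upper bound I would take from Theorem~\ref{thm:main}, adding a direct argument that makes clear the additive error does not depend on $n$. Throughout, as in the statement, $r$ and $s$ are fixed, $m\ge r$, and $n$ is large compared with $m,r,s$. I read $O(m)$ the way the paper uses it here: an additive error that depends on $m,r,s$ but not on $n$. Note that the theorem's constant $C(m,r,s)$ is not literally linear in $m$ (it contains $\binom{m}{r-1}^2$), so the honest reading is \emph{independent of $n$}.

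\emph{Lower bound.} Take $G=K_{r-1,n}\cup\overline{K}_{m-r+1}$, with $r-1$ vertices of $A$ joined to all of $B$ and the rest of $A$ isolated. Then $e(G)=(r-1)n$. Suppose $T$ embeds in $G$. Since $T$ is connected, it lies inside the single nontrivial component $K_{r-1,n}$. The bipartition of a tree is unique, so either $R\subseteq A$ or $S\subseteq A$. Both are impossible: $|R|=r$ and $|S|=s\ge r$ both exceed the $r-1$ non-isolated vertices available in $A$. Hence $G$ is $T$-free and $\ex(m,n;T(r,s))\ge (r-1)n$.

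\emph{Upper bound.} Once $n\ge s(m-r+2)\binom{m}{r-1}$, Theorem~\ref{thm:main} gives $\ex(m,n;T)\le (r-1)n+C(m,r,s)$ with $C$ independent of $n$. I would also record a self-contained argument with the same shape.

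Let $G=(A,B;E)$ be $T$-free, and let $B_{\mathrm{high}}$ be the set of vertices of $B$ of degree at least $r$. Then
\[
e(G)\le (r-1)|B\setminus B_{\mathrm{high}}|+\sum_{b\in B_{\mathrm{high}}}\deg(b)\le (r-1)n+m\,|B_{\mathrm{high}}|.
\]
Each $b\in B_{\mathrm{high}}$ contains some $r$-subset of $A$ in its neighbourhood. Suppose some $r$-subset $Q\subseteq A$ lay in the neighbourhoods of at least $s$ vertices of $B$. Then $G$ would contain $K_{r,s}$ with $Q$ on the $A$-side. But $T(r,s)$ is a spanning subgraph of $K_{r,s}$: map $R$ to $Q$ and $S$ to the $s$ common neighbours. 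This is a contradiction. So by pigeonhole
\[
|B_{\mathrm{high}}|\le (s-1)\binom{m}{r},
\]
and therefore $e(G)\le (r-1)n+m(s-1)\binom{m}{r}$.

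The error term here depends only on $m,r,s$, which is exactly what the corollary asserts. Together with the lower bound, the leading term $(r-1)n$ is determined exactly.

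\emph{Main obstacle.} The real difficulty is the size of the error term: neither Theorem~\ref{thm:main} nor the pigeonhole count gives an error genuinely linear in $m$. If a literal $c(r,s)\,m$ bound were demanded, I would try to sharpen the step bounding $\sum_{b\in B_{\mathrm{high}}}(\deg b-(r-1))$. The idea would be to embed $T$ greedily: send the $R$-side into $A$ and use high-degree $B$-vertices as branching points. The aim is to show that if the total excess exceeds $c(r,s)\,m$, then some $r$ vertices of $A$ have enough common, or suitably chained, neighbours in $B$ to host $T$. I expect this to be the hardest step. For the corollary as the paper states it, independence from $n$ suffices, and the argument above delivers that.
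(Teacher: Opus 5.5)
Your proof is correct. The lower bound is the paper's construction. Your justification is more careful than the paper's one-line remark: a connected $T$ must lie in the single nontrivial component with each colour class on one side, and both $r$ and $s\ge r$ exceed the $r-1$ available $A$-vertices.

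For the upper bound, the paper just invokes Theorem~\ref{thm:main}. Its proof uses the Chen--Wang--Yuan--Zhang lemma to peel off $(r-1)$-sets $A_t$ with disjoint common neighbourhoods $B_t$, argues that the $A_t$ are pairwise disjoint, bounds cross-degrees by $\lfloor s/r\rfloor$, and sums four error terms under the threshold $n\ge s(m-r+2)\binom{m}{r-1}$. Your one-shot K\H{o}v\'ari--S\'os--Tur\'an count is a genuinely different route, and here it is strictly better. It needs no external lemma and no threshold on $n$. Its error is $m(s-1)\binom{m}{r}=m(s-1)\frac{m-r+1}{r}\binom{m}{r-1}$, which is already below the last summand $ms(m-r+2)\binom{m}{r-1}$ of $C(m,r,s)$. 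So it reproves the inequality of Theorem~\ref{thm:main} for every $n$.

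It also avoids a genuine flaw in the paper's route. The disjointness step breaks whenever some $S$-vertex of $T$ has $R$-neighbours sent both into $A_i\setminus\{x\}$ and to $y$, and the disjointness claim itself can be false. Take $r=3$, and let $T$ be the spider whose centre $\sigma_0\in S$ is joined to $\rho_1,\rho_2,\rho_3$, each carrying one extra leaf. Let $G$ have $B=B_1\sqcup B_2$ with both parts of size about $n/2$, $x$ complete to $B$, $a$ complete to $B_1$, $y$ complete to $B_2$, and all other $A$-vertices isolated. Then $G$ is $T$-free: only $x,a,y$ are non-isolated in $A$ and $|S|=4$, so $R$ must map onto $\{x,a,y\}$ and $\sigma_0$ to a common neighbour of all three, which does not exist. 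Yet the iteration outputs $A_1=\{x,a\}$ and $A_2=\{x,y\}$, and the cross bound $e(A_2,B_1)\le s-1$ fails. The iteration also asserts $e(A,B_{\rm res})\ge (r-1)|B_{\rm res}|$ at each step without justification.

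Finally, your caveat about $O(m)$ is right. $C(m,r,s)$ has degree $\max(2r-2,\,r+1)$ in $m$, so the corollary can only be read as asserting an additive error independent of $n$. That is exactly what your count rigorously delivers.
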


    For $r=1$, Theorem~1.2 already gives the exact value $ex(m,n;T(1,s)) = (s-1)m$.
    
\vspace{.1in}
	The Erd\H{o}s--S\'{o}s conjecture~\cite{Erdos1964} asserts that 
	$\ex(n, T) \leq \frac{k-2}{2}n$ for any tree $T$ on $k$ vertices, where the host graph is a general graph on $n$ vertices. Our setting is fundamentally  different that the host graph is required to be bipartite. Since the classes of admissible host graphs are distinct, the two bounds $\ex(n,T)$ and $\ex(m,n;\,T)$ are not directly comparable, and we make no claim that our result implies or is implied by the Erd\H{o}s--S\'{o}s conjecture. Rather, our result can be seen as the natural analogue of that conjecture in the bipartite host setting.

    \begin{example}
    Let $T = P_k$ be the path on $k$ vertices. If $k$ is even 
    then $r = s = k/2$, and if $k$ is odd then $r = \lfloor k/2 
    \rfloor$ and $s = \lceil k/2 \rceil$. In both cases $r = 
    \lfloor k/2 \rfloor \geq 2$ for $k \geq 4$, and our bound 
    gives
    \[
        \ex(m,\, n;\, P_k) \;\leq\; 
        \bigl(\lfloor k/2 \rfloor - 1\bigr)\,n \;+\; C(m,r,s),
    \]
    where $C(m,r,s)$ is the constant of Theorem~\ref{thm:main}.
    The exact value was determined by Gy\'{a}rf\'{a}s, Rousseau, 
    and Schelp~\cite{Gyarfas1984}: for $n$ sufficiently large 
    compared to $m$,
    \[
        \ex(m,\, n;\, P_k) \;=\; 
        \bigl(\lfloor k/2 \rfloor - 1\bigr)(m + n - 1) + O(1),
    \]
    which for $n \gg m$ is approximately $(\lfloor k/2 \rfloor 
    - 1)n$. This confirms that the leading term $(r-1)n$ in 
    Theorem~\ref{thm:main} is tight, and the error term 
    $C(m,r,s)$ is a constant in $n$ as expected. This is 
    consistent with the gap $(r-1)n \leq \ex(m,n;\,T) \leq 
    (r-1)n + C(m,r,s)$ of Corollary~\ref{cor:bounds}.
\end{example}
	

    We close this discussion by comparing Theorem~1.2 with a closely related result obtained independently and concurrently by Waite and Aydin \cite{WaiteAydin}, stated below in our notation.

\begin{theorem}[Waite--Aydin \cite{WaiteAydin}] \label{thm:waite-aydin}
Let $T_{a,b} = (A,B)$ be a tree with $|A| = a \ge b = |B|$, and let $\sigma(T_{a,b})$ denote the maximum, over vertices $x$ in the smaller part $B$, of the number of neighbors of $x$ that are either leaves or are adjacent only to leaves besides $x$. If
\[
\sigma(T_{a,b}) \ge \min\{a-b,\, b-1\},
\]
then for all $m \ge b-1$ and $n \ge a-1$,
\[
ex(m,n;T_{a,b}) \le (b-1)n +
\begin{cases}
(a-b)m & \text{if } a \ge 2b-1, \\[2pt]
(b-1)m & \text{if } a \le 2b-1.
\end{cases}
\]
\end{theorem}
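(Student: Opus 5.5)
Since this result is quoted from \cite{WaiteAydin}, here is how I would try to prove it myself. The plan is the standard \emph{cleaning plus greedy embedding} approach. Let $G=(X,Y;E)$ be bipartite with $|X|=m\ge b-1$ and $|Y|=n\ge a-1$. Set $c=a-b$ if $a\ge 2b-1$ and $c=b-1$ if $a\le 2b-1$, so $c=\min\{a-b,\,b-1\}$ is the quantity in the hypothesis on $\sigma(T_{a,b})$. Assume $e(G)>(b-1)n+cm$.

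First, the cleaning step. Repeatedly delete any vertex of $Y$ whose current degree is at most $b-1$ and any vertex of $X$ whose current degree is at most $c$. Each vertex is deleted at most once. Over the whole process this removes at most $(b-1)n+cm<e(G)$ edges, so a nonempty subgraph $G'$ survives. In $G'$ every vertex of $Y$ has degree at least $b$, and every vertex of $X$ has degree at least $c+1$.

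The heart of the proof is to embed $T_{a,b}$ into $G'$ with the small part $B$ mapped into one side. The natural choice is to send $B$ into $Y$, whose vertices have degree at least $b$, and $A$ into $X$. I would order the embedding as follows.
\begin{enumerate}[label=(\roman*)]
\item Fix $x\in B$ attaining $\sigma(T_{a,b})$. Let $L_x$ be its set of $\sigma$ special neighbours, namely leaves or vertices adjacent only to leaves besides $x$, together with the leaves hanging from them.
\item Remove $L_x$ to obtain a smaller tree $T'$. Embed $T'$ greedily in BFS order, starting from a vertex of $A$ so that $x$ is reached early.
\item When a vertex already placed must receive new children, it needs enough neighbours outside the current image. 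For vertices of $B$ placed in $Y$ this requires degree roughly the number of $A$-vertices already used. For vertices of $A$ placed in $X$ it requires degree roughly the number of $B$-vertices already used.
\end{enumerate}
The point of removing $L_x$ is that $T'$ has at most $a-\sigma$ vertices in $A$ and $b$ in $B$. Since $\sigma\ge c$, the bounds $c+1$ and $b$ are then just enough for these greedy steps. Finally, attach the $\sigma$ special branches at $x$ last, using the remaining degree of $\phi(x)$ and of its neighbours.

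The main obstacle is this final attachment. Degree $b$ in $Y$ is far less than $a-1$, so one cannot hope to embed a vertex of $B$ with all of its $A$-neighbours fresh by degree alone. The two regimes of $c$ must be used differently.
\begin{itemize}
\item When $a\ge 2b-1$, so $c=a-b$: the $X$-side has degree at least $a-b+1$. I would swap roles, mapping $A$-vertices with many leaf children into $Y$-vertices, or leaf-heavy $B$-vertices into $X$-vertices of degree $\ge a-b+1$. The $\sigma$ special branches then absorb the excess $a-b$ while the rest of the tree uses at most $b-1$ other vertices.
\item When $a\le 2b-1$, so $c=b-1$: both sides have minimum degree at least $b$. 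Greedy embedding of everything except the $\ge b-1$ special branches should then go through as in the classical ``min degree $\ge k$ contains every $k$-edge tree'' argument.
\end{itemize}
If plain greedy fails at the last step, I would instead pick $\phi(x)$ to be a vertex of maximum degree in $G'$. I would also use that $e(G')>0$ combined with the deletion bound gives some vertex of large degree, to supply the $\sigma$ leaves.
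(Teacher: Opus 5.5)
There is a genuine gap, and it begins with the bookkeeping. When $a\ge 2b-1$ you have $a-b\ge b-1$, and when $a\le 2b-1$ you have $b-1\ge a-b$. So your $c$ is $\max\{a-b,b-1\}$, not the minimum. The hypothesis only gives $\sigma\ge\delta:=\min\{a-b,b-1\}=a-1-c$, and ``since $\sigma\ge c$'' is false in general. For example, take the spider with $b=2$, $a=7$: centres $x_1,x_2\in B$, each with three pendant leaves, joined through a common neighbour $w$. It has $\sigma=3<5=c$, yet it satisfies $\sigma\ge\delta=1$. Likewise, in the regime $a\le 2b-1$ you use ``$\ge b-1$ special branches'', although only $\sigma\ge a-b$ is assumed.

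With the correct inequality, the $A$-part of $T'$ has $a-\sigma\le c+1$ vertices. So greedy embedding of $T'$ does work, but only if $B$ is sent to the side of minimum degree $c+1$. Your default $B\to Y$ would need $Y$-degrees up to $a-\sigma$, which can exceed $b$. In other words, the hypothesis on $\sigma$ is entirely used up by $T'$.

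That leaves the step you yourself call the main obstacle, and the proposal does not resolve it. After $T'$ is embedded, $a-\sigma$ vertices on the $A$-image side are used. The vertex $\phi(x)$ is only guaranteed $c+1=a-\delta$ neighbours, hence only $\sigma-\delta$ fresh ones, while $\sigma$ are needed. Reordering the greedy embedding does not help, since whichever $B$-vertex receives its children last may need degree $a$ in the worst case.

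The minimum-degree information from your cleaning cannot close this deficit, and the orientation cannot even be fixed by the regime alone. For the spider above, $G=K_{7,6}$ with $|X|=7$, $|Y|=6$ has $42>1\cdot 6+5\cdot 7$ edges and survives your cleaning untouched. It contains $T$ only with $B\to Y$ (because $|Y|<a$), which contradicts your plan to map $B$ into $X$ when $a\ge 2b-1$. Conversely, take $|X|=3$ and $|Y|=n\ge 16$, with every vertex of $Y$ joined to exactly two vertices of $X$. This graph has $2n>n+15$ edges and admits $T$ only with $B\to X$.

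What is missing is an argument that exploits the edge surplus $e(G')>(b-1)|Y'|+c|X'|$. Your deletion count actually yields this surplus, but you never use it. It is needed both to choose the orientation according to $G'$ and to find the extra $\delta$ neighbours at $\phi(x)$. Your remark that $e(G')>0$ ``gives some vertex of large degree'' is not such an argument. The paper itself only cites this theorem, noting that Waite and Aydin use a weighted version of the minimal-subgraph method, so nothing in the paper fills this step for you.
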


Independently, Waite and Aydin \cite{WaiteAydin} study the closely related problem of bipartite extremal numbers for trees and obtain Theorem~\ref{thm:waite-aydin} above, which is sharper than our Corollary~1.3 whenever it applies. Their secondary term is \emph{linear} in $m$ rather than $O(m)$ with unspecified constant, and their threshold relating $m$ and $n$ is mild ($n \ge a-1$), compared to the polynomial threshold $n \ge s(m-r+2)\binom{m}{r-1}$ required in Theorem~1.2. However, their result requires the structural condition $\sigma(T_{a,b}) \ge \min\{a-b, b-1\}$, which restricts their theorem to a subclass of trees, including stars, paths, double-stars, brooms, trees whose part-sizes differ by at most one, and all trees on at most seven vertices. Our Theorem~1.2 places no restriction on the shape of $T(r,s)$ and thus determines the leading-order coefficient $(r-1)$ for every tree once $n$ is sufficiently large relative to $m$, $r$, and $s$, including trees outside the scope of their embedding technique. For instance, it applies to the tree obtained by attaching a leaf to the center vertex of a path on seven vertices, which Waite and Aydin identify as the smallest tree not covered by their method. The two results also rely on different techniques. Our proof proceeds by iteratively extracting common neighborhoods in the style of K\H{o}v\'ari--S\'os--Tur\'an, while Waite and Aydin use a weighted generalization of the classical minimal-subgraph method. We view the two results as complementary. Theorem~\ref{thm:waite-aydin} gives sharper, conjecturally tight constants on a broad class of trees under a mild threshold on $n$, while our Theorem~1.2 determines the leading term for all trees once $n$ is sufficiently large, thereby resolving the leading-order case of Waite and Aydin's Conjecture~1.3 (fixed-part form) for such trees in the large-$n$ regime, up to the secondary term. Determining the correct leading-order behavior for smaller $n$, where the linear threshold of Theorem~\ref{thm:waite-aydin} may hold but ours may not yet apply, remains open. It remains open as well whether the secondary term $O(m)$ in Theorem~1.2 can be sharpened to linear in $m$ for all trees, as their conjecture predicts.

		 \bigskip
	In addition, we determine $\ex(m, n; F)$ where $F = \bigcup_{i=1}^{k} S_{d_i}$ is a star forest with components of degrees $d_1 \geq d_2 \geq \cdots \geq d_k \geq 1$, and the host is a bipartite graph $G = (A, B; E)$ with $|A| = m \leq n = |B|$. The {\it extremal construction} for $F$ is as follows. For $1 \leq i \leq k$, define $\mathcal{B}(m, n; i)$ to be the bipartite graph obtained by taking $i - 1$ vertices of $A$ each adjacent to all of $B$, and letting each of the remaining $m - i + 1$ vertices of $A$ have degree exactly $d_i - 1$ into $B$. The number of edges is
	\[
	e(\mathcal{B}(m, n; i))
	\;=\; (i-1)\,n \;+\; (d_i - 1)(m - i + 1).
	\]
	Since $n$ is sufficiently large compared to $m$, the dominant term is $(i-1)n$, which is strictly increasing in $i$. Hence the maximum over all $i$ is achieved at $i = k$, giving the candidate extremal graph $\mathcal{B}(m, n; k)$ with
	\[
	e(\mathcal{B}(m, n; k))
	\;=\; (k-1)\,n \;+\; (d_k - 1)(m - k + 1).
	\]

	Note that $\mathcal{B}(m, n; k)$ is $F$-free. Indeed, each of the $k-1$ universal vertices in $A$ can serve as the center of at most one star in $F$, so at most $k-1$ of the stars $S^1, \ldots, S^{k-1}$ can be accommodated by the universal vertices. The remaining $m - k + 1$ vertices of $A$ have degree $d_k - 1 < d_k$, so they cannot contain a center of $S^k$. Hence no copy of $F$ exists in $\mathcal{B}(m, n; k)$.

	\begin{theorem}\label{thm:starforest}
		Let $F = \bigcup_{i=1}^{k} S_{d_i}$ be a star forest with $d_1 \geq d_2
		\geq \cdots \geq d_k \geq 1$. For $n$ sufficiently large compared to $m$
		and $d_1, \ldots, d_k$,
		\[
		\ex(m, n;\, F)
		\;=\; (k-1)\,n \;+\; (d_k - 1)(m - k + 1).
		\]
		The unique extremal graph is $\mathcal{B}(m, n; k)$.
	\end{theorem}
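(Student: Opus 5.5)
The plan is to prove the upper bound by isolating the high-degree vertices of $A$ and showing that everything else is forced to have small degree. The lower bound is already given by the construction $\mathcal{B}(m,n;k)$ and its $F$-freeness, shown just before the statement. Throughout, $m \ge k-1$ (so the construction makes sense) and $n$ is large in terms of $m$ and the $d_i$. Set $d = d_1+\cdots+d_k$ and fix the threshold $D = d$.

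Let $G=(A,B;E)$ be $F$-free with $e(G) \ge (k-1)n + (d_k-1)(m-k+1)$. Define $L=\{v\in A : \deg(v)\ge D\}$.

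\emph{Step 1: $|L|\le k-1$.} Suppose $|L|\ge k$. Choose $k$ vertices of $L$ as centers for $S_{d_1},\dots,S_{d_k}$. Assign leaves greedily in $B$. When we treat the $i$-th center, fewer than $d$ vertices of $B$ have been used, and the center has at least $D=d$ neighbours in $B$. So it always has $d_i$ unused neighbours available. All centers lie in $A$ and all leaves in $B$, so the stars are vertex-disjoint. This gives a copy of $F$, a contradiction.

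\emph{Step 2: $|L|= k-1$.} If $|L|\le k-2$, count edges by their endpoint in $A$. Vertices of $L$ contribute at most $(k-2)n$, and vertices of $A\setminus L$ contribute at most $m(D-1)$. Hence
\[
e(G)\le (k-2)n + m(D-1) < (k-1)n + (d_k-1)(m-k+1)
\]
as soon as $n > mD$. This is the only place where "$n$ sufficiently large" is used, and it is the main quantitative obstacle. It is benign here because $D$ depends only on the $d_i$.

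\emph{Step 3: every $v\in A\setminus L$ has $\deg(v)\le d_k-1$.} Suppose some $v\in A\setminus L$ has $\deg(v)\ge d_k$. Take $v$ as the center of $S_{d_k}$ and fix $d_k$ of its neighbours as its leaves. Use the $k-1$ vertices of $L$ as centers for $S_{d_1},\dots,S_{d_{k-1}}$, choosing their leaves greedily while avoiding all previously used vertices of $B$. At each stage fewer than $d$ vertices of $B$ are used, and each vertex of $L$ has degree at least $D=d$, so the greedy choice succeeds. The stars are disjoint because $v\notin L$. This produces $F$, a contradiction.

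\emph{Step 4: counting and uniqueness.} Steps 2 and 3 give
\[
e(G)\le (k-1)n + (d_k-1)(m-k+1).
\]
This proves the upper bound, since we never needed to consider stars centred in $B$. If equality holds, then two things are forced. Each of the $k-1$ vertices of $L$ must be adjacent to all of $B$. Each of the $m-k+1$ vertices of $A\setminus L$ must have degree exactly $d_k-1$. This is precisely the description of $\mathcal{B}(m,n;k)$, which gives uniqueness of the extremal graph.
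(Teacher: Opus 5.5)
Your proof of the upper bound is correct. It reaches the same structural conclusion as the paper: exactly $k-1$ vertices of $A$ have large degree, and every other vertex of $A$ has degree at most $d_k-1$. You get there by a different route.

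The paper argues by induction on $k$:
\begin{itemize}
\item It takes $G$ extremal, so $e(G)\ge e(\mathcal{B}(m,n;k))$, which already presupposes the lower bound.
\item It applies the induction hypothesis to $F'=F-S^k$ to find disjoint stars $S^1,\dots,S^{k-1}$ in $G$.
\item For each $j$, it combines the $(F-S^j)$-freeness of $G-V(S^j)$ with the inductive bound $(k-2)n+O(m)$. This shows that $\Omega(n)$ edges leave $V(S^j)$, so some $u_j\in V(S^j)\cap A$ has linear degree.
\item Only then does it run the analogue of your Step~3.
\end{itemize}
You avoid the induction and the deletion counting by fixing the intrinsic threshold $D=d_1+\cdots+d_k$. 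Greedy embedding gives $|L|\le k-1$, one degree count gives $|L|\ge k-1$, and the same greedy step bounds the remaining degrees. Your route has three advantages. It gives an explicit threshold, $n>m(d_1+\cdots+d_k)$, instead of unspecified $\Omega(n)$ constants. It proves the upper bound for every $F$-free $G$ without first invoking the construction. And the equality analysis comes straight from the degree sum.

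One point you should not take on faith from the text before the statement. The argument there that $\mathcal{B}(m,n;k)$ is $F$-free ignores stars of $F$ centred in $B$ with leaves in $A$. It is false for an arbitrary choice of the low-degree neighbourhoods. For example, take $F=S_2\cup S_2$ and $m\ge 3$. If two degree-one vertices of $A$ share a neighbour $b$, then $b$ centres one $S_2$ and the universal vertex centres another.

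For the lower bound, choose the neighbourhoods of the $m-k+1$ low-degree vertices so that no vertex of $B$ has $d_k$ or more neighbours among them. Pairwise disjoint neighbourhoods work once $n\ge (d_k-1)(m-k+1)$. Then every star in a copy of $F$ must contain one of the $k-1$ universal vertices, which is impossible since there are $k$ disjoint stars.

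Correspondingly, your equality analysis shows that every extremal graph has the stated form. However, the extremal graphs are exactly the graphs of that form that also satisfy this condition on $B$. Indeed, if some $b\in B$ had $d_k$ low-degree neighbours, your Step~3 greedy argument with the $S_{d_k}$ centred at $b$ would produce $F$.
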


	The remainder of the paper is organized as follows. In Section~\ref{sec:prelim}, we introduce notation and state the key lemmas. In Section~\ref{sec:results}, we prove Theorems \ref{thm:main} and \ref{thm:starforest}.

	\section{Preliminaries}\label{sec:prelim}

	All graphs in this paper are finite, simple, and undirected. For a graph $G$, we write $V(G)$ for the vertex set, $E(G)$ for the edge set, and $e(G) = |E(G)|$ for the number of edges. For $v \in V(G)$, the \emph{neighborhood} of $v$ is $N_G(v) = \{u : uv \in E(G)\}$ and the \emph{degree} of $v$ is $d_G(v) = |N_G(v)|$. 
	For a set $X \subseteq V(G)$, the \emph{common neighborhood} of $X$ is
	$	N_G[X] = \bigcap\limits_{v \in X} N_G(v).$
	
	For $X, Y \subseteq V(G)$, we write $e(X, Y)$ for the number of edges with one endpoint in $X$ and the other in $Y$. For two vertex-disjoint graphs $G$ and $H$, we write $G \cup H$ for their disjoint union. A \emph{bipartite graph} $G = (A, B; E)$ has vertex set $A \cup B$ with $A \cap B =	\emptyset$ and $E$ is the set of all edges between $A$ and $B$. The complete bipartite graph with parts of sizes $p$ and $q$ is
	denoted $K_{p,q}$.\\
	
	A graph is \emph{connected} if there is a path between every pair of vertices. A \emph{tree} is a specific type of connected graph that contains no cycles. A tree $T$ is bipartite with parts $R$ and $S$, and we write $T = T(r,s)$ to indicate that $|R| = r$ and $|S| = s$ with $r \leq s$. A \emph{subgraph} embedding of $H$ in $G$ is an injective map $\phi: V(H) \to V(G)$ such that $\phi(u)\phi(v) \in E(G)$ whenever $uv \in E(H)$. We say $G$ \emph{contains} $H$ (written $H \subseteq G$) if such an embedding exists. \\
	
	The \emph{independence number} of a graph $G$, denoted $\alpha(G)$, is the maximum size of an independent set in $G$, where an \emph{independent set} is a set of vertices no two of which are adjacent. A \emph{vertex cover} of a graph $H$ is a set $C \subseteq V(H)$ such that every edge of $H$ has at least one endpoint in $C$. The \emph{minimum vertex cover number} $\tau(H)$ is the size of a smallest vertex cover. An independent vertex cover $IC$ is a vertex cover of $H$ which is an independent set. We show the size of the minimum independent vertex cover (MIC) by $\tau_{ind}(H)$. A \emph{matching} in $H$ is a set of pairwise vertex-disjoint edges. The \emph{matching number} $\nu(H)$ is the size of a maximum matching. The following classical theorem of K\H{o}nig is fundamental:
	
	\begin{theorem}[K\H{o}nig's Theorem~\cite{König1931}]\label{thm:konig}
		For any bipartite graph $H$,
		\[
		\tau(H) = \nu(H).
		\]
	\end{theorem}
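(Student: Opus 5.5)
The statement is K\H{o}nig's classical theorem. I would prove it by exhibiting, from a maximum matching, a vertex cover of the same size, using alternating paths. Throughout, write the bipartition of $H$ as $(X,Y)$.

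The inequality $\nu(H)\le\tau(H)$ is immediate. The edges of a matching $M$ are pairwise vertex-disjoint, so any vertex cover must contain a distinct endpoint of each edge of $M$. Hence every vertex cover has at least $|M|$ vertices.

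For the reverse inequality, fix a maximum matching $M$.
\begin{itemize}
\item Let $U\subseteq X$ be the set of vertices of $X$ not covered by $M$.
\item Let $Z$ be the set of vertices reachable from $U$ by $M$-alternating paths. These are paths that start in $U$, leave $X$ along non-matching edges, and return to $X$ along matching edges. Note that $U\subseteq Z$.
\item Define $C=(X\setminus Z)\cup(Y\cap Z)$.
\end{itemize}

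To show $C$ is a vertex cover, suppose an edge $xy$ with $x\in X$, $y\in Y$ is uncovered, so $x\in Z$ and $y\notin Z$. Consider how $x$ was reached.
\begin{itemize}
\item If $x\in U$, or $x$ was reached through a matching edge, then $xy$ is a non-matching edge, because $x$'s unique matching edge (if any) leads back along the path. Appending $xy$ to the alternating path puts $y$ in $Z$, a contradiction.
\item If $xy\in M$, then $x\notin U$. In this case $x$ can only have been reached via its matching partner $y$, so again $y\in Z$.
\end{itemize}

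Next I would show $|C|\le|M|$ by charging each vertex of $C$ to a distinct edge of $M$.
\begin{itemize}
\item Every $y\in Y\cap Z$ is covered by $M$. Otherwise the alternating path from $U$ to $y$ starts and ends at unmatched vertices, so it is $M$-augmenting, and $M$ would not be maximum (Berge's lemma).
\item Every $x\in X\setminus Z$ is matched, since $U\subseteq Z$.
\item No edge $xy\in M$ has both ends in $C$. If $y\in Z$, then the alternating path reaching $y$ extends along the matching edge $yx$, so $x\in Z$.
\end{itemize}
Therefore distinct vertices of $C$ are charged to distinct matching edges, giving $\tau(H)\le|C|\le|M|=\nu(H)$.

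The main obstacle is the bookkeeping in the two alternating-path arguments: the parity of how each vertex is reached, and the use of maximality via augmenting paths. An alternative route would derive the result from Hall's theorem applied to $H$ with the deficiency version. I would stay with the direct alternating-path argument, since it is self-contained apart from the short augmenting-path argument.
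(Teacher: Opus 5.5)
Your proof is correct. It is the standard alternating-path argument, and each step holds:
\begin{itemize}
\item $C=(X\setminus Z)\cup(Y\cap Z)$ meets every edge.
\item Every vertex of $C$ is $M$-saturated. For vertices of $Y\cap Z$ this follows from the maximality of $M$; for vertices of $X\setminus Z$ it follows because $U\subseteq Z$.
\item No edge of $M$ has both ends in $C$, so sending each vertex of $C$ to its matching edge is injective and $\tau(H)\le|C|\le|M|=\nu(H)$.
\end{itemize}

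The paper gives no proof of this statement. It quotes K\H{o}nig's theorem in the preliminaries with a citation to K\H{o}nig (1931) and never uses it afterwards. Observation~\ref{obs:indep_cover} needs only the trivial inequality $\tau\le\tau_{\mathrm{ind}}$. So the comparison is between a bare citation and your self-contained argument. Yours has the extra merit of being constructive: it produces a minimum vertex cover directly from a maximum matching.

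Three small points could be tightened:
\begin{itemize}
\item When you append an edge to an alternating path, the new endpoint might already lie on the path. In that case it is already in $Z$, so nothing is lost, but it is worth saying.
\item You need only the easy half of Berge's lemma: if $P$ is $M$-augmenting, then $M\triangle E(P)$ is a larger matching.
\item Your second case in the cover argument is already covered by the first. Every vertex of $(X\cap Z)\setminus U$ is entered along a matching edge, so ``$x\in U$ or reached through a matching edge'' exhausts all possibilities.
\end{itemize}
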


Also, we have the theorem.
\begin{theorem}[Gallai~\cite{Gallai1959}]\label{thm:gallai}
	For any graph $G$,
	\[
	\alpha(G) + \tau(G) = |V(G)|.
	\]
\end{theorem}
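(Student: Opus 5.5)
The plan is to prove Theorem~\ref{thm:gallai} by showing that complementation $X \mapsto V(G)\setminus X$ is a bijection between the independent sets of $G$ and the vertex covers of $G$. The equality then follows by comparing sizes at the extremes. No structure of $G$ beyond its edge set is needed, and in particular we will not use bipartiteness or Theorem~\ref{thm:konig}.

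\textbf{Step 1 (the complement correspondence).} Let $X \subseteq V(G)$. We claim that $X$ is independent if and only if $V(G)\setminus X$ is a vertex cover. Suppose $X$ is independent and $uv \in E(G)$. Then $u$ and $v$ are not both in $X$, so at least one of them lies in $V(G)\setminus X$. Conversely, suppose $C = V(G)\setminus X$ is a vertex cover and $u,v \in X$ are adjacent. Then the edge $uv$ has no endpoint in $C$, which is a contradiction. Hence $X$ is independent.

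\textbf{Step 2 (the two inequalities).} Take a maximum independent set $I$, so $|I| = \alpha(G)$. By Step~1, $V(G)\setminus I$ is a vertex cover, which gives
\[
\tau(G) \le |V(G)| - \alpha(G).
\]
Now take a minimum vertex cover $C$, so $|C| = \tau(G)$. By Step~1, $V(G)\setminus C$ is independent, which gives
\[
\alpha(G) \ge |V(G)| - \tau(G).
\]
Combining the two inequalities yields $\alpha(G) + \tau(G) = |V(G)|$.

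There is no real obstacle here. The only point requiring care is the direction ``vertex cover implies the complement is independent,'' and this is the one-line contradiction in Step~1. Degenerate cases are handled automatically: if $G$ has no edges, the empty set is a vertex cover and $V(G)$ is independent, so $\alpha(G) = |V(G)|$ and $\tau(G) = 0$, consistent with the formula.
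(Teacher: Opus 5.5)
Your proof is correct and complete: Step 1 shows that $X \mapsto V(G)\setminus X$ sends independent sets exactly onto vertex covers, and the two inequalities in Step 2 together give $\alpha(G)+\tau(G)=|V(G)|$. The paper gives no proof of its own and simply cites Gallai, and your complementation argument is the standard proof of this fact.
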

	
The following lemma is due to Chen, Wang, Yuan, and Zhang~\cite{Chen2022}. It is the
	primary tool for passing from an edge density condition to a structural embedding.
	
	\begin{lemma}[Chen--Wang--Yuan--Zhang~\cite{Chen2022}]\label{lem:common_nbhd}
		Let $G = (A, B; E)$ be a bipartite graph with $|A| = a$ and $|B| = n$, where $a$
		is a fixed constant. If $a \geq b$ and $e(G) \geq bn$ for $n$ sufficiently large
		compared to $a$, then there exists a vertex set $A' \subseteq A$ with
		$|A'| = \lceil b \rceil$ and a constant $c > 0$ such that
		\[
		|N_G[A']| \geq cn.
		\]
	\end{lemma}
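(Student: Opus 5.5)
The statement is Lemma~\ref{lem:common_nbhd}: if $|A| = a$ is fixed, $a \geq b$, and $e(G) \geq bn$, then some $\lceil b\rceil$-subset $A' \subseteq A$ has $|N_G[A']| \geq cn$. The plan is a double count in the style of K\H{o}v\'ari--S\'os--Tur\'an. Set $t = \lceil b \rceil$. If $b \le 0$ the claim is vacuous, since the empty set has common neighborhood $B$, so assume $b > 0$ and hence $1 \le t \le a$. Count the pairs $(v, X)$ with $v \in B$, $X \subseteq A$, $|X| = t$ and $X \subseteq N_G(v)$. Counting by $v$ gives $\sum_{v \in B} \binom{d(v)}{t}$. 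Counting by $X$ gives $\sum_{X} |N_G[X]|$. Since there are only $\binom{a}{t}$ choices of $X$, pigeonhole produces some $X$ with
\[
|N_G[X]| \;\geq\; \binom{a}{t}^{-1} \sum_{v \in B} \binom{d(v)}{t}.
\]
It therefore suffices to show that $\sum_{v\in B}\binom{d(v)}{t} \ge c' n$ for some constant $c' > 0$ depending only on $a$ and $b$.

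The average degree on the $B$ side is $\bar d = e(G)/n \ge b$. Jensen cannot be applied naively, because $\binom{x}{t}$ vanishes for $x<t$ and $b$ may be less than $t$. I would instead use a threshold argument. Every $d(v)$ is at most $a$. Let $B_t = \{v \in B : d(v) \ge t\}$, the set of vertices whose term $\binom{d(v)}{t}$ is at least $1$. Then
\[
bn \;\le\; e(G) \;\le\; (t-1)\,(n - |B_t|) + a\,|B_t|,
\]
which gives
\[
|B_t| \;\ge\; \frac{(b - t + 1)\,n}{a - t + 1}.
\]
Since $t = \lceil b\rceil < b+1$, we have $b - t + 1 > 0$, so this lower bound is a positive constant multiple of $n$. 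Hence $\sum_{v} \binom{d(v)}{t} \ge |B_t| \ge c'n$, and combining with the pigeonhole step gives $c = c'/\binom{a}{t}$. Note that $t \le a$ holds because $b \le a$, so the denominator $a-t+1$ is at least $1$.

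The main obstacle is the non-integer case. When $b$ is not an integer, the slack $b - \lceil b\rceil + 1$ can be arbitrarily small, but it is still strictly positive, and that is exactly what keeps $c>0$. One has to be careful that the constant depends on this fractional slack as well as on $a$. The bound $e(G)\ge bn$ is only used through $\bar d \ge b$, so "$n$ sufficiently large" is not actually needed beyond $n \ge 1$. It remains to confirm that $c$ is independent of $n$, which it is: $c = (b-t+1)/\bigl((a-t+1)\binom{a}{t}\bigr)$.
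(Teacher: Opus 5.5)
Your proposal is correct. The paper does not reprove this lemma; it cites Chen--Wang--Yuan--Zhang. Your argument (the threshold count $|B_t| \ge \frac{(b-t+1)n}{a-t+1}$ followed by pigeonholing over the $\binom{a}{t}$ possible $t$-subsets of $A$) gives exactly the constant $c = \frac{b-t+1}{(a-t+1)\binom{a}{t}}$ that the paper attributes to that lemma in the proof of Lemma~\ref{clm:threshold}, so it is evidently the same approach. Your side remarks are also accurate: $t \le a$ follows from $b \le a$, positivity of $b-t+1$ is what keeps $c>0$, and no largeness of $n$ is needed.
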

	
	In other words, if $G$ has linearly many edges in $n$, then some $\lceil b \rceil$ vertices of $A$ share at least $cn$ common neighbors in $B$.

	\section{Proofs of Main Theorems}\label{sec:results}
	
	For a bipartite tree, we have the following quick observation.
	
%
%
%

\begin{observation}\label{obs:indep_cover}
	Let $T = T(r,s)$ be a tree with parts $R$ and $S$, and
	$|R| = r \leq s = |S|$. Then $T$ has a MIC, and
	\[
	\tau(T) \leq \tau_{\mathrm{ind}}(T) \leq r.
	\]
\end{observation}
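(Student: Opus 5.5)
The plan is to use the bipartition of $T$ directly: the smaller part $R$ will serve as the independent vertex cover, and the two inequalities then follow from the definitions. Since $T$ is a tree, it is connected and bipartite with bipartition $(R,S)$. Every edge of $T$ joins a vertex of $R$ to a vertex of $S$. Hence $R$ meets every edge, so $R$ is a vertex cover. Because $R$ is one side of the bipartition, no two of its vertices are adjacent, so $R$ is also independent.

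It follows that $R$ is an independent vertex cover. In particular, the family of independent vertex covers of $T$ is nonempty. It is also finite, so it has a member of minimum size. This minimum-size member is a MIC, which proves that $T$ has a MIC, and it gives $\tau_{\mathrm{ind}}(T) \le |R| = r$.

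For the left inequality, we compare minima over nested families. Every independent vertex cover is in particular a vertex cover. Thus $\tau(T)$ is a minimum over a family that contains the family defining $\tau_{\mathrm{ind}}(T)$, and so $\tau(T) \le \tau_{\mathrm{ind}}(T)$.

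There is essentially no obstacle. The only point to be careful about is the degenerate case where $T$ is a single vertex: then $r=0$, there are no edges, and the empty set is an independent vertex cover, so the bound still holds. If one wants more, Theorem~\ref{thm:konig} also gives $\tau(T)=\nu(T)\le \min(r,s)=r$ directly, since every matching uses distinct vertices of $R$. This is not needed for the stated inequalities.
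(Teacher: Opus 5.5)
Your proof is correct and matches the paper's: $R$ is an independent vertex cover, which gives both the existence of a MIC and $\tau_{\mathrm{ind}}(T)\le r$, and $\tau(T)\le\tau_{\mathrm{ind}}(T)$ holds because every independent vertex cover is a vertex cover, so $\tau(T)$ is a minimum over a larger family. Your remarks on finiteness (so that a minimum exists) and on the single-vertex case are harmless additions that the paper leaves implicit.
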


\begin{proof}
	Since $T$ is bipartite with parts $R$ and $S$, every edge 
	of $T$ has one endpoint in $R$ and one in $S$. Hence $R$ 
	is an independent set and every edge is incident to a vertex 
	of $R$, so $R$ is an independent vertex cover of $T$ with 
	$|R| = r$. This shows $T$ has a MIC and 
	$\tau_{\mathrm{ind}}(T) \leq r$.
	
	For the inequality $\tau(T) \leq \tau_{\mathrm{ind}}(T)$, note 
	that every independent vertex cover is in particular a vertex 
	cover (it satisfies the vertex cover condition with the additional 
	constraint of being independent). Hence the minimum over all 
	independent vertex covers is at least the minimum over all vertex 
	covers, giving $\tau(T) \leq \tau_{\mathrm{ind}}(T)$.
\end{proof}

\begin{remark}
	The inequality $\tau(T) \leq \tau_{\mathrm{ind}}(T)$ relies 
	on $T$ being a tree and hence bipartite. For graphs containing 
	odd cycles, an independent vertex cover may not exist at all. 
	For example, in the complete graph $K_3$, any vertex cover 
	requires at least two vertices, but no two vertices of $K_3$ 
	are non-adjacent, so no independent vertex cover exists and 
	$\tau_{\mathrm{ind}}(K_3)$ is undefined. More generally, 
	$\tau_{\mathrm{ind}}(G)$ is well-defined if and only if $G$ 
	is bipartite, since only bipartite graphs have independent 
	sets that can cover all edges.
\end{remark}

\begin{convention}\label{conv:tree}
	Let $T = T(r,s)$ be a tree with bipartition $(R, S)$. By 
	Observation~\ref{obs:indep_cover}, $T$ has a MIC of size 
	at most $\min(r,s)$. Throughout this paper, we always label 
	the bipartition of $T(r,s)$ so that $|R| = r \leq s = |S|$, 
	and we identify $R$ as the MIC of $T$. That is, we write 
	$T = T(r,s)$ with the convention that $R$ is the smaller 
	bipartition class and serves as the MIC in all subsequent 
	arguments.
\end{convention}
	\ \\
		The following is a weaker upper bound comparing Theorem~\ref{thm:main}.
	
	\begin{proposition}\label{prop:weaker}
		Let $T = T(r,s)$ be a tree with parts $R$ and $S$ such that $|R| = r \leq s = |S|$, and $m$ and $n$ are integers and $n$ is sufficiently large compared to $m$. Then,
		\[
		\ex(m, n;\, T(r,s)) \leq r\cdot n.
		\]
	\end{proposition}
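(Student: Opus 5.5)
We argue by contradiction: suppose $G=(A,B;E)$ has $|A|=m$, $|B|=n$ and $e(G) > r\cdot n$, and we find a copy of $T(r,s)$ in $G$. The plan is to place the MIC $R$ of $T$ (Convention~\ref{conv:tree}) inside $A$ and the larger class $S$ inside $B$, using a large common neighbourhood to host $S$.

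First, since $e(G)\le mn$, the assumption $e(G)>rn$ forces $m>r$, so $m\ge r+1$. We may therefore apply Lemma~\ref{lem:common_nbhd} with $a=m$ and $b=r$ (note $a\ge b$ and $e(G)\ge rn$), for $n$ sufficiently large compared to $m$. This yields a set $A'\subseteq A$ with $|A'|=r$ and a constant $c>0$ such that $|N_G[A']|\ge cn$.

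Next, take $n$ large enough that $cn\ge s$. Fix any bijection $\phi:R\to A'$ and embed $S$ injectively into $N_G[A']$, which is possible since this set has at least $s$ vertices. Every edge of $T$ joins some $x\in R$ to some $y\in S$. Its image joins $\phi(x)\in A'$ to $\phi(y)\in N_G[A']$, and $\phi(y)$ is adjacent to every vertex of $A'$. Hence $\phi(x)\phi(y)\in E(G)$, and $\phi$ is an embedding of $T$ into $G$. In fact $G$ contains $K_{r,s}\supseteq T$. Therefore any $T$-free $G$ has $e(G)\le rn$.

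The only delicate point is the quantifier bookkeeping in Lemma~\ref{lem:common_nbhd}. The constant $c$ depends only on $m$ and $r$, so "$n$ sufficiently large" must absorb both the lemma's threshold and the condition $n\ge s/c$. If one wants an explicit bound independent of the lemma, a Kővári–Sós–Turán style count works instead. Consider $\sum_{v\in B}\binom{d(v)}{r}$, which counts pairs $(v,X)$ with $X$ an $r$-subset of $N(v)$. Write $e(G)=rn+t$ with $t\ge 1$. Convexity alone only shows that this sum is positive. The sharper count uses the fact that each vertex with $d(v)\ge r+1$ contributes at least $d(v)-r$. Vertices of degree at most $r$ contribute at most $r$ edges each, so the excess $t$ must come from vertices of degree greater than $r$, and $\sum_{v}(d(v)-r)^+\ge t$. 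Alone this is insufficient, since $t$ could be $1$.

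For this reason I would rely on Lemma~\ref{lem:common_nbhd} as stated and accept that this is where the essential difficulty lies.
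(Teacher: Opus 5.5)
Your argument is correct and matches the paper's proof: Lemma~\ref{lem:common_nbhd} with $b=r$ yields an $r$-set $A'\subseteq A$ with $|N_G[A']|\ge cn\ge s$, and mapping $R$ bijectively onto $A'$ and $S$ injectively into $N_G[A']$ embeds $T$. Your observation that $e(G)>rn$ forces $m>r$, so the lemma's hypothesis $a\ge b$ holds, is a detail the paper leaves implicit. However, your closing dismissal of the K\H{o}v\'{a}ri--S\'{o}s--Tur\'{a}n count is mistaken: since $\binom{d}{r}\ge d-r+1$ for every integer $d\ge 0$, one has $\sum_{v\in B}\binom{d(v)}{r}\ge e(G)-(r-1)n>n$, so some $r$-subset of $A$ has more than $n/\binom{m}{r}$ common neighbours, and this proves the proposition for all $n\ge s\binom{m}{r}$ without invoking the lemma.
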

	
	\begin{proof}
		Let $G = (A, B; E)$ be a bipartite graph with $|A| = m$, $|B| = n$, and
		$e(G) > r \cdot n$, where $n$ is sufficiently large compared to $m$, $r$, $s$. Since $e(G) > r \cdot n$ and $|A| = m$ is fixed, we may apply
		Lemma~\ref{lem:common_nbhd} with $b = r$. This yields a subset $A' \subseteq A$ with $|A'| = r$ and a constant $c > 0$ such that the common neighborhood of $A'$ in $B$ satisfies
		\[
		|N_G[A']| \geq cn.
		\]
		In particular, for $n$ sufficiently large (specifically $cn > s$), we have $|N_G[A']| \geq cn > s$.\\
		
		By Observation~\ref{obs:indep_cover}, the part $R$ of $T(r,s)$ with
		$|R| = r$ is a minimum independent vertex cover of $T$. Now, we construct an embedding $\phi: V(T) \to V(G)$ as follows. Since $|R| = r = |A'|$, define $\phi$ restricted to $R$ as any bijection $\phi\!\upharpoonright_R : R \to A'$. And since $|N_G[A']| \geq cn > s = |S|$, we can choose an injective map $\phi\!\upharpoonright_S : S \to N_G[A']$ such that the images of $R$ and $S$ are disjoint since $A' \subseteq A$ and $N_G[A'] \subseteq B$, and $A \cap B = \emptyset$ in the bipartite graph $G$. To verify that every edge of $T$ is mapped to an edge of $G$, let $uv \in E(T)$ be any edge of $T$. By  Observation~\ref{obs:indep_cover}, every edge of $T$ has one endpoint in $R$ and one in $S$. Without loss of generality, $u \in R$ and $v \in S$. By definition of the common neighborhood $N_G[A'] = \bigcap_{w \in A'} N_G(w)$, every vertex in $N_G[A']$ is adjacent in $G$ to \emph{every} vertex in $A'$. In particular, $\phi(v)$ is adjacent to $\phi(u) \in A'$. Therefore, $\phi(u)\phi(v) \in E(G)$. Since $uv$ was an arbitrary edge of $T$, we conclude that $\phi$ maps every edge of $T$ to an edge of $G$.  Hence $\phi$ is a valid subgraph embedding, and
		$T(r,s) \subseteq G$.\\
		
		We have shown that any bipartite graph $G = (A,B;E)$
		with $|A| = m$, $|B| = n$ (sufficiently large), and $e(G) > rn$ contains $T(r,s)$ as a subgraph. Therefore, any $T(r,s)$-free such graph has at most $rn$ edges, i.e.,
		\[
		\ex(m, n;\, T(r,s)) \leq r \cdot n. \qedhere
		\]
	\end{proof}
	
	\bigskip

	Now, we prove Theorem~\ref{thm:main} by restating it. First we prove two lemmas which will be used in the proof.
	
	\begin{lemma}\label{clm:mindeg}
		Let $T = T(r,s)$ be a tree with bipartition $(R, S)$,
		$|R| = r \leq s = |S|$. Then
		\[
		\min_{x \in R}\, d_T(x)
		\;\leq\; \left\lfloor \frac{s}{r} \right\rfloor + 1.
		\]
	\end{lemma}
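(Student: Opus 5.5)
We will prove the bound by an averaging (pigeonhole) argument on the edges of $T$. Since $T$ is a tree on $r+s$ vertices, it has exactly $r+s-1$ edges. Moreover, $T$ is bipartite with parts $R$ and $S$, so every edge has exactly one endpoint in $R$, and therefore
\[
\sum_{x \in R} d_T(x) \;=\; e(T) \;=\; r+s-1 .
\]

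Since the minimum of the degrees $d_T(x)$, $x\in R$, is at most their average, we obtain
\[
\min_{x\in R} d_T(x) \;\le\; \frac{r+s-1}{r} \;=\; 1 + \frac{s-1}{r}.
\]
The left-hand side is an integer, so we may take floors:
\[
\min_{x\in R} d_T(x) \;\le\; 1 + \left\lfloor \frac{s-1}{r} \right\rfloor \;\le\; 1 + \left\lfloor \frac{s}{r} \right\rfloor ,
\]
where the last step uses that $\lfloor\cdot\rfloor$ is monotone. This is the claimed inequality.

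No step here poses a real obstacle. The only points requiring care are the identity $e(T)=r+s-1$ for a tree and the observation that each edge of $T$ is counted exactly once in the degree sum over $R$; the latter holds because $R$ is an independent vertex cover (Observation~\ref{obs:indep_cover}). The argument in fact yields the slightly sharper bound $1+\lfloor (s-1)/r\rfloor$, which could be recorded in case it is needed later.
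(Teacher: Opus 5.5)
Your proof is correct and uses the same averaging argument as the paper: sum the $R$-degrees to get $r+s-1$ and bound the minimum by the average. Your step of invoking integrality to pass to $1+\lfloor (s-1)/r\rfloor$ is genuinely needed. The paper's last step asserts $\frac{s}{r}+1-\frac{1}{r}\le\lfloor s/r\rfloor+1$ as an inequality of real numbers, which fails for example at $r=3$, $s=5$ (where $7/3>2$), so your version repairs that gap.
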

	
	\begin{proof}
		Since $T$ is a tree on $r + s$ vertices it has $r + s - 1$
		edges, all of which go between $R$ and $S$. Hence
		$\sum_{x \in R} d_T(x) = r + s - 1$, and by averaging
		\[
		\min_{x \in R}\, d_T(x)
		\;\leq\; \frac{r + s - 1}{r}
		\;=\; \frac{s}{r} + 1 - \frac{1}{r}
		\;\leq\; \left\lfloor \frac{s}{r} \right\rfloor + 1.
		\qedhere
		\]
	\end{proof}
	
	\begin{lemma}\label{clm:threshold}
		Let $G = (A, B; E)$ be a bipartite graph with $|A| = m$ and
		$|B| = n$. If $e(G) \geq (r-1)n$ and
		\begin{equation}\label{eq:nthreshold}
			n \;\geq\; s \cdot (m - r + 2)\binom{m}{r-1},
		\end{equation}
		then Lemma~\ref{lem:common_nbhd} applied with $b = r - 1$
		yields a set $A' \subseteq A$ with $|A'| = r - 1$ whose
		common neighborhood in $B$ satisfies $|N_G[A']| \geq cn$,
		where
		\[
		c \;=\; \frac{1}{(m - r + 2)\dbinom{m}{r-1}},
		\]
		and in particular $|N_G[A']| \geq s$.
	\end{lemma}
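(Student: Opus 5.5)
The plan is to avoid the non-explicit constant of Lemma~\ref{lem:common_nbhd} and prove the conclusion directly by double counting, in the style of K\H{o}v\'ari--S\'os--Tur\'an, with an explicit constant at least as good as the stated $c$. We assume $r\ge 2$, as in Theorem~\ref{thm:main}, so that $r-1\ge 1$. Consider the pairs $(X,b)$ with $b\in B$, $X\subseteq A$, $|X|=r-1$ and $X\subseteq N_G(b)$. Counting by $b$, the number of such pairs is
\[
P=\sum_{b\in B}\binom{d_G(b)}{r-1}.
\]
Counting by $X$, the same number is $\sum_{X}|N_G[X]|$, where $X$ ranges over the $\binom{m}{r-1}$ subsets of $A$ of size $r-1$.

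The key step is a lower bound $P\ge n$. I would use the elementary pointwise inequality
\[
\binom{d}{r-1}\ \ge\ d-(r-2)\qquad\text{for every integer } 0\le d\le m .
\]
For $d\le r-2$ the right-hand side is $\le 0$ while the left-hand side is $0$, so the inequality holds. For $d\ge r-1$ it holds by induction on $d$. At $d=r-1$ both sides equal $1$. For the step, Pascal's rule gives
\[
\binom{d+1}{r-1}=\binom{d}{r-1}+\binom{d}{r-2},
\]
and $\binom{d}{r-2}\ge 1$, so each increase of $d$ by one raises the left-hand side by at least one. Summing over $b\in B$ and using $\sum_b d_G(b)=e(G)\ge (r-1)n$ gives
\[
P\ \ge\ e(G)-(r-2)n\ \ge\ n .
\]

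By pigeonhole over the $\binom{m}{r-1}$ choices of $X$, some $A'$ with $|A'|=r-1$ satisfies
\[
|N_G[A']|\ \ge\ \frac{n}{\binom{m}{r-1}}\ \ge\ \frac{n}{(m-r+2)\binom{m}{r-1}}=cn .
\]
The last inequality holds because $m-r+2\ge 1$ whenever $m\ge r-1$. If $m<r-1$, then every $b$ has $d_G(b)\le m<r-1$, so the hypothesis $e(G)\ge (r-1)n$ is impossible and there is nothing to prove. This is exactly the common-neighbourhood conclusion promised by Lemma~\ref{lem:common_nbhd} with $b=r-1$, now with the explicit constant $c$. Finally, the threshold~\eqref{eq:nthreshold} gives $cn\ge s$, hence $|N_G[A']|\ge s$.

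The only step needing care is the pointwise binomial inequality, including the degenerate range $d<r-1$, because it is what converts the edge count into a count of $(r-1)$-stars without any convexity or Jensen argument. Everything else is bookkeeping. I would also note in the write-up that the factor $(m-r+2)$ in $c$ is slack: the argument actually yields $n/\binom{m}{r-1}$, so it proves the stated bound with room to spare.
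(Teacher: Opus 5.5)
Your proof is correct, and it takes a different route from the paper. The paper's proof is a one-line substitution: it plugs $b=t=r-1$ and $a=m$ into the explicit constant $c=\frac{b-t+1}{(a-t+1)\binom{a}{t}}$ of the Chen--Wang--Yuan--Zhang lemma, then notes that $cn\ge s$ is just a rewriting of \eqref{eq:nthreshold}.

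You instead reprove the needed case of that lemma. You double count pairs $(X,b)$ with $X\subseteq N_G(b)$ and use the pointwise bound $\binom{d}{r-1}\ge d-(r-2)$ to get $\sum_X |N_G[X]|\ge e(G)-(r-2)n\ge n$. You then average over the $\binom{m}{r-1}$ sets $X$.

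The paper's route is shorter, but it relies on information its Lemma~\ref{lem:common_nbhd} does not record. That lemma, as stated, gives neither the value of $c$ nor how large $n$ must be. So a reader cannot check from the paper alone that \eqref{eq:nthreshold} satisfies the cited lemma's ``$n$ sufficiently large'' proviso.

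Your argument avoids this. It is self-contained and needs no largeness assumption to obtain $|N_G[A']|\ge n/\binom{m}{r-1}$; the threshold is used only to turn $cn$ into $s$. It also shows that the factor $(m-r+2)$ in $c$ is slack, so $|N_G[A']|\ge s$ already follows from $n\ge s\binom{m}{r-1}$.

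Your side remarks are also right. The assumption $r\ge 2$ is needed for the pointwise inequality, which fails for $r=1$, and it is the only case in which the lemma is used. For $m<r-1$, the hypothesis $e(G)\ge (r-1)n$ cannot hold when $n\ge 1$.
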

	
	\begin{proof}
		Substituting $b = r - 1$, $t = \lceil b \rceil = r - 1$,
		and $a = m$ into the constant $c = \frac{b - t + 1}{(a - t
			+ 1)\binom{a}{t}}$ of Lemma~\ref{lem:common_nbhd} gives
		\[
		c \;=\; \frac{1}{(m - r + 2)\dbinom{m}{r-1}}.
		\]
		The condition $|N_G[A']| \geq cn \geq s$ is then equivalent
		to~\eqref{eq:nthreshold}.
	\end{proof}
	
	\bigskip

	\begin{proof}[Proof of Theorem~\ref{thm:main}]

        We consider two cases based on the value of $r$.

    \medskip
    Case $r = 1$. Here $R = \{x\}$ is a single vertex and $T(1,s)$ is a star with center $x$ and $s$ leaves. Any $T(1,s)$-free bipartite graph $G = (A,B;E)$ has maximum degree at most $s - 1$ on the $A$-side, since any vertex $v \in A$ with $d_G(v) \geq s$ together with $s$ of 
    its neighbors in $B$ forms a copy of $T(1,s)$ with center mapped to $v$. Hence $e(G) \leq (s-1)m$, giving 
    $\ex(m,n;\,T(1,s)) \leq (s-1)m$. 
    The lower bound $ex(m,n;\,T(1,s)) \geq (s-1)m$ follows by taking $G$ to be the bipartite graph with $A = \{1,\dots,m\}$, $B = \{1,\dots,n\}$, and with vertex $i \in A$ adjacent to the $s-1$ vertices $\{i, i+1, \dots, i+s-2\} \pmod n$ of $B$ for each $i = 1,\dots,m$. Every vertex of $A$ has degree exactly $s-1$, and since each edge of $G$ contributes to the degree of exactly one vertex of $B$, every vertex of $B$ has degree at most $\lceil (s-1)m/n \rceil$, which is at most $s-1$ once $n$ is sufficiently large compared to $m$ and $s$ (indeed $n \ge m$ already suffices). Hence $\Delta(G) \le s-1$ on both sides, so $G$ contains no copy of $T(1,s)$ centered on either side, and $G$ is $T(1,s)$-free. As $e(G) = (s-1)m$, we conclude $ex(m,n;\,T(1,s)) \geq (s-1)m$. Hence $ex(m,n;\,T(1,s)) = (s-1)m$.

    \medskip
    Case $r \geq 2$.
        Let $G = (A,B;E)$ be a $T(r,s)$-free bipartite graph with $|A| = m$, $|B| = n$ satisfying the threshold condition. We show $e(G) \leq (r-1)n + C(m,r,s)$ directly by edge counting, without assuming any lower bound on $e(G)$.
		
		\medskip
		
		We construct a sequence of pairs $(A_1, B_1), (A_2, B_2), \ldots$ where each $A_t \subseteq A$ has $|A_t| = r - 1$,
		$B_t \subseteq B$ is the common neighborhood of $A_t$ in the residual part of $B$, and $B_1, B_2, \ldots$ are
		pairwise disjoint. Set $B^{(0)}_{\rm res} = B$. At step $t \geq 1$, let
		$B^{(t-1)}_{\rm res} = B \setminus \bigcup_{i < t} B_i$.
		If
		\begin{equation}\label{eq:largeres}
			\left|B^{(t-1)}_{\rm res}\right|
			\;\geq\; s \cdot (m - r + 2)\binom{m}{r-1},
		\end{equation}
		then by Lemma~\ref{clm:threshold} applied to $G[A,
		B^{(t-1)}_{\rm res}]$,
		there exists $A_t \subseteq A$ with $|A_t| = r - 1$ since $e(A, B^{(t-1)}_{\rm res})
		\geq (r-1)|B^{(t-1)}_{\rm res}|$. And
		\[
		B_t \;:=\; N_G[A_t] \cap B^{(t-1)}_{\rm res},
		\quad |B_t| \;\geq\; c\,|B^{(t-1)}_{\rm res}|,
		\quad |B_t| \;\geq\; s.
		\]
		We then set $B^{(t)}_{\rm res} = B^{(t-1)}_{\rm res}
		\setminus B_t$ and proceed to step $t + 1$.
		
		The iteration terminates when~\eqref{eq:largeres} fails,
		leaving a residual $B_{\rm res} = B^{(T_{\max})}_{\rm res}$
		with
		\[
		|B_{\rm res}| \;<\; s \cdot (m - r + 2)\binom{m}{r-1}.
		\]
		For convenience, we set $ \kappa \,=\, s \cdot (m - r + 2)\binom{m}{r-1}
		,$ where $\kappa$ is a constant depending only on $m$, $r$, $s$.\\
		
		We show that the sets $A_t$ are pairwise disjoint. Suppose first that $A_i = A_j$ for some $i < j$. At step $j$, the lemma is applied to the residual $B^{(j-1)}_{\rm res} = B \setminus \bigcup_{\ell < j} B_\ell$, which by construction is disjoint from $B_i$. However, $B_j \subseteq N_G[A_j] = N_G[A_i]$, and at step $i$ the set $B_i$ was defined as the common neighborhood of $A_i$ in the residual at that point, so $N_G[A_i] \cap B^{(j-1)}_{\rm res} = \emptyset$, meaning no common neighborhood of $A_j = A_i$ exists in $B^{(j-1)}_{\rm res}$, contradicting the application of Lemma~\ref{lem:common_nbhd} at step $j$.\\ 
        
        Suppose next that $A_i \neq A_j$ but $A_i \cap A_j \neq \emptyset$ for some $i \neq j$. Let $x \in A_i \cap A_j$ and $y \in A_j \setminus A_i$. Since $x \in A_i$, the vertex $x$ is adjacent to every vertex of $B_i$ in $G$, and since $x \in A_j$, it is also adjacent to every vertex of $B_j$ in $G$. We embed $T$ into $G$ as follows. Label the vertices of $R$ as $\rho_1, \ldots, \rho_r$. Map $\rho_1, \ldots, \rho_{r-2}$ injectively into $A_i \setminus \{x\}$, which has size $r - 2$, and map the $S$-neighbors of each $\rho_\ell$ for $\ell \leq r - 2$ injectively into $B_i$, which is possible since $|B_i| \geq s$. Map $\rho_{r-1} \mapsto x$ and the $S$-neighbors of $\rho_{r-1}$ injectively into $B_j$, which is possible since $x \in A_j$ implies $x$ is adjacent to all of $B_j$ and $|B_j| \geq s$. Map $\rho_r \mapsto y$ and the $S$-neighbors of $\rho_r$ injectively into $B_j \setminus \{\text{used vertices}\}$, which is possible since $y \in A_j$ implies $y$ is adjacent to all of $B_j$ and $|B_j| \geq s$ with $n$ sufficiently large. Since $B_i$ and $B_j$ are disjoint with $|B_i|, |B_j| \geq s$, all vertices of $S$ can be mapped injectively, and the $r$ vertices of $R$ map to the distinct vertices $(A_i \setminus \{x\}) \cup \{x, y\}$ in $A$, giving a valid embedding of $T$ into $G$ and contradicting $T(r,s)$-freeness. Hence, all sets $A_t$ are pairwise disjoint. Since $\sum_t |A_t| = (r-1) \cdot |\{t\}| \leq |A| = m$, the iteration runs for at most $\lfloor m/(r-1) \rfloor$ steps.\\

		Now, we show the degree bound into each $B_t$. We claim that for each $t$ and each $v \in A \setminus A_t$,
\begin{equation}\label{eq:degbound}
    d_G(v,\, B_t) \;\leq\; \left\lfloor \frac{s}{r} \right\rfloor.
\end{equation}
Suppose not, so $d_G(v, B_t) \geq \lfloor s/r \rfloor + 1$ for some $v \in A \setminus A_t$. By Lemma~\ref{clm:mindeg}, pick $\rho \in R$ with $d_T(\rho) \leq \lfloor s/r \rfloor + 1 \leq d_G(v, B_t)$. Embed $T$ into $G$ as follows: map $\rho \mapsto v$ and the $d_T(\rho)$ neighbors of $\rho$ in $S$ injectively into $N_G(v) \cap B_t$, map the remaining $r - 1$ vertices of $R \setminus \{\rho\}$ injectively into $A_t$, and map all remaining vertices of $S$ injectively into $B_t \setminus \{\text{used vertices}\}$, which is possible since every vertex of $B_t$ is adjacent to all of $A_t$ and $|B_t| \geq s$. This gives a copy of $T$ in $G$, a contradiction. Hence~\eqref{eq:degbound} holds. As a consequence, since all $A_t$ are pairwise disjoint, every $u \in A_j$ satisfies $u \in A \setminus A_i$ for any $i \neq j$, and therefore $d_G(u, B_i) \leq \lfloor s/r \rfloor$ by~\eqref{eq:degbound}. Hence,
\begin{equation}\label{eq:cross}
    e(A_j,\, B_i) \;\leq\; (r-1)\left\lfloor \frac{s}{r} \right\rfloor \;\leq\; s - 1 \;=\; O(1)
\end{equation}
for every pair $i \neq j$.\\
		
Now, we count the edges in $G$.	Decompose the edges of $G$ as
		\begin{equation}\label{eq:decomp}
			e(G) \;=\;
			\underbrace{\sum_{t} e(A_t,\, B_t)}_{\rm (I)}
			\;+\;
			\underbrace{\sum_{i \neq j} e(A_i,\, B_j)}_{\rm (II)}
			\;+\;
			\underbrace{e\!\left(A \setminus \bigcup_t A_t,\;
				\bigcup_t B_t\right)}_{\rm (III)}
			\;+\;
			\underbrace{e(A,\, B_{\rm res})}_{\rm (IV)}.
		\end{equation}
		
		\begin{itemize}[leftmargin=2em, label={}]
			
			\item Term~(I). Since $|A_t| = r - 1$ and
			the sets $B_t$ are pairwise disjoint subsets of $B$,
			\[
			\sum_t e(A_t,\, B_t)
			\;\leq\; (r - 1)\sum_t |B_t|
			\;\leq\; (r - 1)\,n.
			\]
			
			\item Term~(II). By~\eqref{eq:cross}, each
			ordered pair $(i, j)$ with $i \neq j$ contributes at
			most $s - 1$ edges. With $T_{\max} \leq \binom{m}{r-1}$
			pairs,
			\[
			\sum_{i \neq j} e(A_i,\, B_j)
			\;\leq\; \binom{m}{r-1}^2 \cdot (s - 1)
			\;=\; O(m^{2(r-1)})
			\;=\; O(m).
			\]
			
			\item Term~(III). Each vertex $v \in A
			\setminus \bigcup_t A_t$ satisfies $d_G(v, B_t) \leq
			\lfloor s/r \rfloor$ for every $t$ by~\eqref{eq:degbound}.
			Hence
			\[
			e\!\left(A \setminus \bigcup_t A_t,\;
			\bigcup_t B_t\right)
			\;\leq\; m \cdot T_{\max} \cdot
			\left\lfloor \frac{s}{r} \right\rfloor
			\;\leq\; m \cdot \binom{m}{r-1} \cdot s
			\;=\; O(m).
			\]
			
			\item Term~(IV). Since $|B_{\rm res}| < \kappa
			= O(m)$,
			\[
			e(A,\, B_{\rm res})
			\;\leq\; m \cdot |B_{\rm res}|
			\;< \; m \cdot \kappa
			\;=\; O(m).
			\]
			
		\end{itemize}
		
		Combining all four terms in~\eqref{eq:decomp}, gives
		$
		e(G) \;\leq\; (r - 1)\,n \;+\; O(m),
		$		and hence, $\ex(m, n;\, T(r,s)) \leq (r-1)n + O(m)$. \qedhere
	\end{proof}

	\bigskip

	Now, we prove the Theorem~\ref{thm:starforest}.

	\begin{proof}[Proof of Theorem~\ref{thm:starforest}]
		The lower bound $\ex(m, n; F) \geq e(\mathcal{B}(m, n; k))$ follows from
		the construction of  $\mathcal{B}(m, n; k)$. We prove the upper bound by induction on $k$. For the base case, let $k = 1$. Here $F = S_{d_1}$ is a single star. An
		$S_{d_1}$-free bipartite graph $G = (A, B; E)$ has maximum degree at most
		$d_1 - 1$ on the $A$-side, since any vertex $v \in A$ with $d_G(v) \geq d_1$
		together with $d_1$ neighbors in $B$ forms a copy of $S_{d_1}$. Hence $e(G) \;\leq\; (d_1 - 1)\,m \;=\; e(\mathcal{B}(m, n; 1))$. Now, assume the theorem holds for all star forests
		with fewer than $k$ components. Let $G = (A, B; E)$ be an extremal $F$-free
		bipartite graph with $|A| = m$, $|B| = n$, and $e(G) = \ex(m, n; F)$. Let $F' = F - S^k = \bigcup_{i=1}^{k-1} S_{d_i}$. By the induction hypothesis,
		\[
		\ex(m, n;\, F') \;=\; (k-2)\,n \;+\; (d_{k-1} - 1)(m - k + 2).
		\]
		Since $d_{k-1} \geq d_k$ and $n$ is sufficiently large compared to $m$,
		\begin{align*}
			e(\mathcal{B}(m,n;k)) - \ex(m, n; F')
			&= (k-1)n + (d_k-1)(m-k+1) \\
			&\quad - (k-2)n - (d_{k-1}-1)(m-k+2) \\
			&= n + (d_k - 1)(m-k+1) - (d_{k-1}-1)(m-k+2) \\
			&\geq n - (d_{k-1} - 1)(m - k + 2) \;>\; 0,
		\end{align*}
		for $n$ sufficiently large compared to $m$. Hence $e(G) \geq
		e(\mathcal{B}(m,n;k)) > \ex(m, n; F')$, and by the induction hypothesis
		$F' \subseteq G$. That is, there exist $k-1$ vertex-disjoint stars
		$S^1, \ldots, S^{k-1}$ in $G$. Fix any $j \in \{1, \ldots, k-1\}$ and let $S^j$ be the $j$-th star in the copy
		of $F'$ found above, with center $c_j$ and leaf set $L_j$. Since $G$ is $F$-free,
		$G - S^j$ must be $(F - S^j)$-free. Note that $F - S^j$ is a star forest with
		$k - 1$ components with $S^j$ removed and the remaining stars unchanged. By the
		induction hypothesis applied to $F - S^j$,
		\[
		e(G - S^j) \;\leq\; \ex(m, n;\, F - S^j)
		\;=\; (k-2)\,n \;+\; O(m).
		\]
		Let $e_0$ denote the number of edges between $V(S^j)$ and $V(G) \setminus
		V(S^j)$. Then
		\begin{align*}
			e_0 &= e(G) - e(G[S^j]) - e(G - S^j) \\
			&\geq (k-1)n + (d_k-1)(m-k+1) - \binom{d_j+1}{2} - (k-2)n - O(m) \\
			&= n + (d_k - 1)(m-k+1) - O(m) \\
			&= \Omega(n).
		\end{align*}
		
		Since $|V(S^j)| = d_j + 1$ is a fixed constant and $e_0 = \Omega(n)$, 
		at least one vertex $u_j \in V(S^j)$ has degree $\Omega(n)$ in $G$. 
		We call $u_j$ the \emph{linear-degree vertex} of $S^j$. Note that 
		$u_j \in A$: if the center $c_j \in A$ then $e_0$ concentrates at 
		$c_j$ since leaves $L_j \subseteq B$ each have degree at most $m = 
		O(m) \ll n$ into $A$; if $c_j \in B$ then leaves $L_j \subseteq A$ 
		and at least one leaf $\ell^* \in A$ satisfies $d_G(\ell^*) = \Omega(n)$, 
		so we take $u_j = \ell^*$. In either case, $u_j \in A$ has linear 
		degree and can serve as a star center in $A$. \\
		
		Since the above holds for every $j \in \{1, \ldots, k-1\}$, let
		$ U \;=\; \{c_1, c_2, \ldots, c_{k-1}\} $
		be the set of $k-1$ linear-degree centers. We claim $G - U$ is $S_{d_k}$-free. Otherwise, a copy of $S_{d_k}$ 
		in $G - U$ together with stars centered at $u_1, \ldots, u_{k-1}$ 
		(since each $u_j$ has linear degree $\Omega(n)$) yields a copy of $F$ in $G$, contradicting $F$-freeness. Hence every vertex of $A \setminus U$ has degree at most $d_k - 1$ into $B$, and therefore $G \subseteq \mathcal{B}(m, n; k)$. Therefore $G \subseteq \mathcal{B}(m, n; k)$, giving
		\[
		e(G) \;\leq\; e(\mathcal{B}(m, n; k))
		\;=\; (k-1)\,n \;+\; (d_k - 1)(m - k + 1).
		\]
		This completes the induction and the proof of Theorem~\ref{thm:starforest}.
	\end{proof}
	
	\begin{remark}
		By the results of Lidick\'{y}--Liu--Palmer~\cite{LLP2013}, in the general-host result, the leading coefficient of $n$ in $e(F(n,i))$ is
		$f_i = i - 1 + \frac{d_i - 1}{2}$, which depends on both $i$ and $d_i$,
		creating a non-trivial competition between indices and requiring three separate
		cases. In our bipartite setting, the leading coefficient is simply $i - 1$, so the
		maximum is always achieved at $i = k$ and the proof reduces to a single
		inductive case.
	\end{remark}



\section*{Acknowledgments}
The author thanks Professor Nuh Aydin and Lucas Waite for helpful conversations related to bipartite Tur\'an numbers for trees and for
bringing this problem to the author's attention. Independently, and using a different approach, they obtained related results, now
available in \cite{WaiteAydin}.

\end{document}